\def\B'c{{\mathcal{B'}}}
\def\U'c{{\mathcal{U'}}}
\def\opn#1#2{\def#1{\operatorname{#2}}} % to make operators
\opn\chara{char}
\opn\length{\ell}
\opn\projdim{proj\,dim}
\opn\injdim{inj\,dim}
\opn\ini{in}
\opn\rank{rank}
\opn\depth{depth}
\opn\height{ht}
\opn\embdim{emb\,dim}
\opn\codim{codim}
\opn\Tr{Tr}
\opn\bigrank{big\,rank}
\opn\superheight{superheight}\opn\lcm{lcm}
\opn\trdeg{tr\,deg}%
\opn\reg{reg}
\opn\lreg{lreg}
\opn\set{set}
\opn\supp{Supp}
\opn\shad{Shad}
\opn\div{div}
\opn\Div{Div}
\opn\cl{cl}
\opn\Cl{Cl}
\opn\Spec{Spec}
\opn\Supp{Supp}
\opn\supp{supp}
\opn\Sing{Sing}
\opn\Ass{Ass}
\opn\Ann{Ann}
\opn\Rad{Rad}
\opn\Soc{Soc}
\opn\Ker{Ker}
\opn\Coker{Coker}
\opn\Im{Im}
\opn\Hom{Hom}
\opn\Tor{Tor}
\opn\Ext{Ext}
\opn\End{End}
\opn\Aut{Aut}
\opn\id{id}
\opn\nat{nat}
\opn\GL{GL}
\opn\SL{SL}
\opn\mod{mod}
\opn\ord{ord}
\opn\aff{aff}
\opn\con{conv}
\opn\relint{relint}
\opn\st{st}
\opn\lk{lk}
\opn\cn{cn}
\opn\core{core}
\opn\vol{vol}
\opn\gr{gr}
\def\pot#1#2{#1[\kern-0.28ex[#2]\kern-0.28ex]}
\opn\dirlim{\underrightarrow{\lim}}
\opn\invlim{\underleftarrow{\lim}}
\def\pnt{{\raise0.5mm\hbox{\large\bf.}}}
\def\Implies{\ifmmode\Longrightarrow \else
     \unskip${}\Longrightarrow{}$\ignorespaces\fi}
\def\implies{\ifmmode\Rightarrow \else
     \unskip${}\Rightarrow{}$\ignorespaces\fi}
\def\iff{\ifmmode\Longleftrightarrow \else
     \unskip${}\Longleftrightarrow{}$\ignorespaces\fi}
\newtheorem{Theorem}{Theorem}[section]
\newtheorem{Lemma}[Theorem]{Lemma}
\newtheorem{Example}[Theorem]{Example}
\newtheorem{Definition}[Theorem]{Definition}
\let\epsilon=\varepsilon
\let\phi=\varphi
\let\kappa=\varkappa
\title{Gotzmann lexsegment ideals}
\author{Anda Olteanu \and Oana Olteanu \and Loredana Sorrenti}
\thanks{The third author was partially supported by Regional Research Grant A1UNIRC017 from Calabria (2008).}
\address{Faculty of Mathematics and Computer Science, Ovidius University, Bd.\ Mamaia 124,
 900527 Constanta, Romania,} \email{olteanuandageorgiana@gmail.com} 
\address{Faculty of Mathematics and Computer Science, Ovidius University, Bd.\ Mamaia 124,
 900527 Constanta, Romania,} \email{olteanuoanastefania@yahoo.com} 
\address{DIMET University of Reggio Calabria, Faculty of Engineering, via Graziella (Feo di Vito), 89100 Reggio Calabria, Italy} \email{loredana.sorrenti@unirc.it.}
\begin{document}

%\maketitle
\begin{abstract} In this paper we characterize the componentwise lexsegment ideals which are componentwise linear and the lexsegment ideals generated in one degree which are Gotzmann.

\end{abstract}
\maketitle
\section{Introduction}
Let $k$ be a field and $S=k[x_1,\ldots,x_n]$ the ring of polynomials in $n$ variables. We consider $S$ to be standard graded, that is $\deg(x_i)=1$ for all $i$. We denote by $\frak m$ the maximal graded ideal of $S$. Let $I\subset S$ be a graded ideal, $I=\bigoplus\limits_{q\geq 0}I_q$. We denote by $H(I,-)$ its Hilbert function, that is $H(I,q)=\dim_k(I_q)$ for all $q\geq 0$, and by $I_{\langle q\rangle}$ the homogeneous ideal generated by the component of degree $q$ of $I$. 

In \cite{HH}, J. Herzog and T. Hibi defined the componentwise linear ideals. Namely, a graded ideal $I$ of $S$ is called \textit{componentwise linear} if, for each degree $q$, $I_{\langle q\rangle}$ has a linear resolution. 

A. Soleyman Jahan and X. Zheng generalized the notion of ideal with linear quotients as follows: a graded ideal $I$ \textit{has componentwise linear quotients} if, for each degree $q$, $I_{\langle q\rangle}$ has linear quotients. They proved that any graded ideal with linear quotients has componentwise linear quotients \cite[Theorem 2.7]{SZ}.

Since any ideal with linear quotients generated in one degree has a linear resolution \cite{CH}, looking at the above definitions, one may note that any graded ideal with componentwise linear quotients is componentwise linear. In general, the converse does not hold.

Along the above definitions, one may define componentwise lexsegment ideals (see Definition \ref{compwiselex}). We prove that, for this class of ideals, being componentwise linear is equivalent to having componentwise linear quotients.

Let $d$ be a positive integer. Then any non-negative integer $a$ has a unique representation of the form 
$$a={a_d\choose d}+\ldots +{a_j\choose j},$$ 
where $a_d>a_{d-1}>\ldots>a_j\geq j\geq1$. This is called the \textit{binomial} or \textit{Macaulay expansion of $a$ with respect to $d$}. For such an expansion of $a$ with respect to $d$ one defines  
$$a^{\langle d\rangle}={a_d+1\choose d+1}+\ldots +{a_j+1\choose j+1}.$$

It is customary to put $0^{\langle d\rangle}=0$ for any $d>0$.

We recall the Gotzmann's persistence theorem \cite{G}.

\begin{Theorem}Let $I\subset S$ be a homogeneous ideal generated by elements of degree at most $d$. If $H(I,d+1)=H(I,d)^{\langle d\rangle}$, then $H(I,q+1)=H(I,q)^{\langle q\rangle}$ for all $q\geq d$.   
\end{Theorem}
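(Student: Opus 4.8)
The statement is Gotzmann's persistence theorem, and I would reconstruct the standard combinatorial proof. Its skeleton: Macaulay's growth estimate supplies one inequality for free, and the work is to propagate the extremal (``Gotzmann'') case from a single degree to all of them. First the routine reductions: a field extension changes no Hilbert function, so assume $k$ infinite; and since $I$ is generated in degrees $\le d$ one has $I_q=\mm^{\,q-d}I_d$ for all $q\ge d$, so $I$ and $I_{\langle d\rangle}$ have the same Hilbert function in degrees $\ge d$, and I may assume $I$ is generated in the single degree $d$. Write $V=I_d\subseteq S_d$ and $c=\dim_k V$. Unwinding the Macaulay operator, the hypothesis says exactly that the shadow $S_1V=I_{d+1}$ has the smallest dimension a $c$-dimensional subspace of $S_d$ can have, namely the value $\dim_k S_1L_d$ realised by the lexsegment $L_d$ of cardinality $c$. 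Iterating Macaulay's theorem gives $\dim_k I_q=\dim_k S_{q-d}V\ge\dim_k S_{q-d}L_d$ for every $q\ge d$, so what remains is the reverse inequality; equivalently, the ideal $SV$ has no minimal generator in degree $>d$, i.e.\ $\reg(SV)\le d$.

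To prove this I would use Green's hyperplane restriction theorem and induct on the number of variables. Pick a generic linear form $\ell$; with $\bar S=S/\ell S$ and $\bar I$ the image of $I$, the exact sequence
\[
0\longrightarrow\bigl(S/(I:\ell)\bigr)(-1)\xrightarrow{\;\cdot\ell\;}S/I\longrightarrow S/(I,\ell)\longrightarrow 0
\]
gives $H(S/I,q)=H(S/(I:\ell),q-1)+H(\bar S/\bar I,q)$. Green's theorem bounds $H(\bar S/\bar I,q)$ via the Green operator applied to $H(S/I,q)$, while Macaulay bounds the growth of $S/(I:\ell)$; substituting into this identity at degree $d$, the Gotzmann equality for $S/I$ is forced to be an equality in Green's theorem for the $(n-1)$-variable quotient $\bar S/\bar I$ together with a companion extremal equality for $S/(I:\ell)$. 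One then runs a double induction — on $n$ through $\bar I$, which is again generated in degrees $\le d$, and through $I:\ell$, which either equals $I$ (when $\ell$ is a nonzerodivisor modulo $I$, the easy case) or strictly contains it with a lower-dimensional defect, so that a secondary induction on the support of $S/I$ applies — and reassembles persistence for $S/I$ from the conclusions for $\bar I$ and $I:\ell$ via the sequence; the base case $n=1$ is trivial.

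The real obstacle, I expect, is the bookkeeping in this induction rather than any single conceptual point: one must reconcile Macaulay's operator $a^{\langle q\rangle}$ (growth of quotients) with Green's operator (restriction to a hyperplane), verify the delicate equality-case implications — that equality for $S/I$ at one degree descends to \emph{both} $\bar I$ and $I:\ell$, and at the correct degrees — and set up the two inductions so that together they close. A good source of intuition, and a partial shortcut, is the monomial case: after a generic change of coordinates and passage to the initial subspace $V'=\ini_{<}(V)$ (revlex), $V'$ is strongly stable of dimension $c$, one has the exact count $\dim_k S_1V'=\sum_{j=1}^{n}\dim_k\bigl(V'\cap k[x_1,\ldots,x_j]_d\bigr)$, and the lexsegment simultaneously minimises every restriction cardinality; the hypothesis then forces $V'$ to have, variable by variable, the same distribution of largest variables as $L_d$, whence $\dim_k S_jV'=\dim_k S_jL_d$ for all $j$. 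But this only identifies the picture, not the proof, because passing to initial subspaces is merely an inclusion and $\dim_k S_jV$ can strictly exceed $\dim_k S_jV'$ once $j\ge 2$ — precisely the regularity gap that Green's theorem is needed to close.
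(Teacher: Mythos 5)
First, for calibration: the paper does not prove this statement at all --- it is Gotzmann's persistence theorem, quoted from \cite{G} as background --- so your reconstruction must stand on its own. The strategy you outline is indeed the standard modern proof (Green's hyperplane restriction theorem plus induction on $n$ through the exact sequence $0\to (S/(I:\ell))(-1)\to S/I\to S/(I,\ell)\to 0$ for a generic linear form $\ell$; cf.\ Bruns--Herzog, Theorem 4.3.3), and your reductions to $k$ infinite and to $I$ generated in the single degree $d$ are fine. Your reading of the hypothesis as ``the shadow of $I_d$ has the minimal, lex-realised dimension'' is also the correct intent of the theorem, even though the paper's literal convention $H(I,q)=\dim_k I_q$ makes its displayed formula really the quotient-ring version in disguise.

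The problem is that what you call ``bookkeeping'' is the entire proof, and it is missing. The decisive step --- that the single equality at degree $d$ forces \emph{both} the extremal case of Green's inequality for $\bar S/\bar I$ \emph{and} the extremal Macaulay growth for $S/(I:\ell)$, at the correct degrees, and that these two conclusions fed back through the exact sequence yield the equality from degree $d+1$ to $d+2$ --- is asserted in one clause and then explicitly flagged as unverified. Without it nothing is proved. The branch $I:\ell\neq I$ is precisely where a naive version of the induction fails ($I:\ell$ need not be generated in degrees $\le d$, so the inductive hypothesis does not apply to it without further argument), and your ``secondary induction on the support of $S/I$'' is not specified enough to check. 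Separately, the claim ``equivalently, the ideal $SV$ has no minimal generator in degree $>d$, i.e.\ $\reg(SV)\le d$'' is wrong on both counts: $SV$ is generated in degree $d$ by construction, so the first condition is vacuous; and $\reg(SV)\le d$ (a linear resolution) is strictly weaker than persistence --- the ideal $(x_1x_2,x_1x_3,x_2x_3)\subset k[x_1,x_2,x_3]$ has a linear resolution but $\dim_k I_3=7$ while the lexsegment of size $3$ in degree $2$ has shadow of size $6$. This misidentification of the target does not derail the Green-theorem route, but it signals that the equality-case analysis you defer has not actually been thought through.
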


Given a graded ideal $I\subset S$, there exists a unique lexicographic ideal $I^{lex}$ such that $I$ and $I^{lex}$ have the same Hilbert function. The lexicographic ideal $I^{lex}$ is constructed as follows. For each graded component $I_j$ of $I$, one consider $SI_j^{lex}$ to be the ideal generated by the unique initial lexsegment $\mathcal{L}_j$ such that $|\mathcal{L}_j|=\dim_k(I_j)$. Let $I^{lex}=\bigoplus\limits_{j} I_j^{lex}$. It is known that $I^{lex}$ constructed as before is indeed an ideal.

A graded ideal $I\subset S$ generated in degree $d$ is called a \textit{Gotzmann ideal} if the number of generators of $\frak mI$ is the smallest as possible, namely it is equal to the number of generators of $\frak mI^{lex}$. Therefore, by Gotzmann's persistence theorem, a graded ideal $I\subset S$ generated in degree $d$ is Gotzmann if and only if $I$ and $(I^{lex})_{\langle d\rangle}$ have the same Hilbert function. 

For graded Gotzmann ideals we have the following characterization in terms of (graded) Betti numbers \cite{HH}.

\begin{Theorem}Let $I\subset S$ be a graded ideal. The following conditions are equivalent:
\begin{itemize}
	\item [(a)] $\beta_{ij}(S/I)=\beta_{ij}(S/I^{lex})$ for all $i,j$;
	\item [(b)] $\beta_{1j}(S/I)=\beta_{1j}(S/I^{lex})$ for all $j$; 
	\item [(c)] $\beta_1(S/I)=\beta_1(S/I^{lex})$;
	\item [(d)] $I$ is a Gotzmann ideal.
\end{itemize}     
\end{Theorem}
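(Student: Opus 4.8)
The plan is to prove the equivalence of the four conditions by first establishing the chain of implications $(a)\Rightarrow(b)\Rightarrow(c)$, which are immediate, and then closing the loop with $(c)\Rightarrow(d)$ and $(d)\Rightarrow(a)$. The implications $(a)\Rightarrow(b)$ and $(b)\Rightarrow(c)$ are trivial: the first restricts attention to the homological degree $i=1$, and the second sums over $j$, since $\beta_1(S/I)=\sum_j\beta_{1j}(S/I)$. So the two substantive arguments are $(c)\Rightarrow(d)$ and $(d)\Rightarrow(a)$.

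For $(c)\Rightarrow(d)$, I would work from the definition of Gotzmann that was recorded just before the statement: $I$ generated in degree $d$ is Gotzmann precisely when the number of minimal generators of $\mm I$ equals that of $\mm I^{lex}$, equivalently when $H(\mm I, d+1)=H(\mm I^{lex}, d+1)$. Since $I$ and $I^{lex}$ have the same Hilbert function by construction, this is the statement that $H(I,d+1)=H(I^{lex},d+1)$, and Macaulay's theorem together with the fact that $I^{lex}$ is lex gives $H(I^{lex},d+1)=H(I^{lex},d)^{\langle d\rangle}=H(I,d)^{\langle d\rangle}$, so Gotzmann amounts to $H(I,d+1)=H(I,d)^{\langle d\rangle}$ — the hypothesis of Gotzmann's persistence theorem. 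Now the first syzygies of $S/I$ in degree $d+1$ count exactly the minimal generators of $I$ (which agree with those of $I^{lex}$ by the Hilbert function equality in degree $d$) together with the linear syzygies, while in degree $>d+1$ the Betti number $\beta_{1j}(S/I)$ is governed by $H(\mm I, j)$ versus the dimension of the free cover, i.e. by how far $H(I,q+1)$ falls below $H(I,q)^{\langle q\rangle}$ for $q\geq d$. The key point is that $\beta_1(S/I)\geq\beta_1(S/I^{lex})$ always (this is the content of the fact that the lex ideal minimizes Betti numbers, or can be seen directly from the Eliahou–Kervaire resolution of $I^{lex}$ compared with Macaulay-type bounds for $I$), with equality forcing $H(I,q+1)=H(I,q)^{\langle q\rangle}$ for all $q\geq d$, whence $I$ is Gotzmann.

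For $(d)\Rightarrow(a)$, the strategy is: if $I$ is Gotzmann then by persistence $H(I,q+1)=H(I,q)^{\langle q\rangle}$ for every $q\geq d$, so $I$ and $I^{lex}$ have, degree by degree, the extremal Hilbert function behavior; then invoke Bigatti–Hulett–Pardue (the lex ideal has the largest graded Betti numbers among ideals with a given Hilbert function) in one direction and the sharpness of that bound under the persistence condition in the other. Concretely, one shows that when the Hilbert function of $S/I$ satisfies the maximal-growth equality in all degrees $\geq d$, the Betti numbers of $S/I$ are forced to coincide with those of $S/I^{lex}$; this can be done by an Eliahou–Kervaire / crystallization type argument, or by citing the known result that extremal Hilbert function growth propagates to extremal Betti numbers. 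The main obstacle I expect is precisely this last step: carefully relating the numerical condition "$\beta_1$ equals that of the lex ideal" to the full collapse of all higher Betti numbers onto those of $I^{lex}$ — one needs the known inequalities $\beta_{ij}(S/I)\le\beta_{ij}(S/I^{lex})$ in a form where equality at $i=1$ propagates upward, which ultimately rests on Gotzmann persistence controlling every graded piece simultaneously. Everything else is bookkeeping with binomial (Macaulay) expansions and the Eliahou–Kervaire formula for the Betti numbers of the lex ideal.
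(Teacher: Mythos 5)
A preliminary remark: the paper does not prove this statement --- it is recalled from Herzog--Hibi \cite{HH} as background --- so there is no in-paper argument to compare with, and I can only judge your proposal on its own terms. Your skeleton ($(a)\Rightarrow(b)\Rightarrow(c)$ trivially, then $(c)\Rightarrow(d)$ and $(d)\Rightarrow(a)$) is the right one, but both substantive steps have problems. In $(c)\Rightarrow(d)$ you assert $\beta_1(S/I)\ge\beta_1(S/I^{lex})$ ``always,'' attributing this to ``the fact that the lex ideal minimizes Betti numbers.'' This is backwards, and you contradict it yourself in the next paragraph, where you correctly quote Bigatti--Hulett--Pardue as saying the lex ideal has the \emph{largest} graded Betti numbers. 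For instance $I=(x_1x_3,x_1x_4,x_2x_3,x_2x_4)\subset k[x_1,\dots,x_4]$ has $\beta_1(S/I)=4$ while $\beta_1(S/I^{lex})=6$. The correct, entirely elementary, argument is: $\beta_{1j}(S/I)=\dim_k I_j-\dim_k S_1I_{j-1}$, and Macaulay's theorem (lex segments have minimal shadow) gives $\dim_k S_1I_{j-1}\ge\dim_k S_1(I^{lex})_{j-1}$; since $\dim_k I_j=\dim_k(I^{lex})_j$, this yields $\beta_{1j}(S/I)\le\beta_{1j}(S/I^{lex})$ for every $j$, and equality of the totals forces equality degree by degree, i.e.\ minimal growth of each component, which is the Gotzmann condition. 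So the step is salvageable, but as written the key inequality points the wrong way.

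The real gap is $(d)\Rightarrow(a)$. You reduce it to ``the known result that extremal Hilbert function growth propagates to extremal Betti numbers,'' but that \emph{is} the content of $(d)\Rightarrow(a)$; citing it is circular, and Gotzmann persistence by itself only controls Hilbert functions, not higher syzygies. What is needed is a mechanism that computes all $\beta_{ij}$ from Hilbert-function data once $(d)$ holds. Two standard routes: (i) a Gotzmann ideal is componentwise linear, and for componentwise linear ideals (a class containing the stable ideal $I^{lex}$) the graded Betti numbers are determined by the Hilbert functions of the components $I_{\langle j\rangle}$, which agree with those of $(I^{lex})_{\langle j\rangle}$ precisely because of $(d)$; or (ii) in the equigenerated case, $(d)$ forces $I^{lex}$ to be generated in degree $d$ and $I$ to have a $d$-linear resolution, and for a $d$-linear resolution the numbers $\beta_{i,i+d}$ are read off from the Hilbert series, hence coincide for $I$ and $I^{lex}$. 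Without something of this kind the implication is not established. A smaller point: the theorem is stated for arbitrary graded ideals, whereas the definition of ``Gotzmann'' recorded in the paper covers only ideals generated in one degree; in general condition $(d)$ must be read as ``every $I_{\langle j\rangle}$ is Gotzmann,'' and your argument should be phrased componentwise accordingly.
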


Let $I$ be a Gotzmann monomial ideal generated in degree $d$. From the above results it follows that $I^{lex}$ is also generated in degree $d$ and $I$ has a linear resolution.

We aim at characterizing the lexsegment ideals generated in one degree which are Gotzmann.

For an integer $d\geq 2$, let $\mathcal M_d$ be the set of all monomials of degree $d$ in $S$ ordered lexicographically with $x_1>x_2>\ldots>x_n$. A lexsegment ideal of $S$ generated in degree $d$ is a monomial ideal generated by a segment of $\mathcal M_d$, that is by a set of monomials of the form
$$\mathcal L(u,v)=\{w\in\mathcal M_d\ |\ u\geq w\geq v\},$$  
where $u,\ v\in\mathcal M_d$, $u\geq v$.

A monomial ideal generated by an initial lexsegment $\mathcal L^i(v)=\{w\in\mathcal M_d\ |\ w\geq v\}$, $v\in\mathcal M_d$, is called an \textit{initial lexsegment ideal}.
 
Initial lexsegment ideals are obviously Gotzmann.

Arbitrary lexsegment ideals with linear resolutions have been characterized in \cite{ADH}. Their characterization distinguishes between completely and non-completely lexsegment ideals. In order to characterize the Gotzmann property of a lexsegment ideal we also need to distinguish between these two classes of ideals. In the last two sections of this paper, we analyze these two classes.

Our paper gives a complete solution to a problem posed by Professor J. Herzog at the School of Research PRAGMATIC 2008 in Catania, July 2008.
	
	\[
\]
\textbf{Acknowledgment.} The first and the third authors are grateful to the organizers of the School of Research PRAGMATIC 2008, Catania, Italy. The authors would like to thank Professor J\"urgen Herzog and Professor Viviana Ene for valuable discussions and comments during the preparation of this paper.  

\section{Componentwise lexsegment ideals}

We define the componentwise lexsegment ideals and we characterize all the componentwise lexsegment ideals which are componentwise linear.

\begin{Definition}\label{compwiselex}\rm Let $I$ be a monomial ideal in $S$ and $d$ the least degree of the minimal monomial generators. The ideal $I$ is called \textit{componentwise lexsegment} if, for all $j\geq d$, its degree $j$ component $I_j$ is generated, as $k$-vector space, by the lexsegment set $\mathcal{L}(x_1^{j-d}u,vx_n^{j-d})$.
\end{Definition}

Obviously, completely lexsegment ideals are componentwise lexsegment ideals as well.

\begin{Example}\rm  The ideal $I=(x_1x_3^2,x_2^3,x_1x_2^2x_3)$ is a componentwise lexsegment ideal. Indeed, one may note that $I_3$ is the $k$-vector space spanned by $\mathcal{L}(x_1x_3^2,x_2^3)$ and $I_4$ is the $k$-vector space generated by $\mathcal{L}(x_1^2x_3^2,x_2^3x_3)$. Since $\mathcal{L}(x_1^2x_3^2,x_2^3x_3)$ is a completely lexsegment set \cite[Theorem 2.3]{DH}, $I_j$ is generated by the lexsegment set $\mathcal{L}(x_1^{j-2}x_3^2,x_2^3x_3^{j-3})$ for all $j\geq 4$.
\end{Example}

We characterize all the componentwise lexsegment ideals which are componentwise linear. In the same time, we prove the equivalence of the notions componentwise linear ideal and componentwise linear quotients for this particular class of graded ideals.
 
One may note that we can assume $x_1\mid u$ since otherwise we can study the ideal in a polynomial ring in a smaller number of variables.
 
\begin{Theorem} Let $I$ be a componentwise lexsegment ideal and $d\geq1$ the lowest degree of the minimal monomial generators of $I$. Let $u,v\in\mathcal{M}_d$, $x_1|u$ be such that $I_{\langle d\rangle}=(\mathcal{L}(u,v))$. The following conditions are equivalent:

\begin{itemize}
	\item[(a)] $I$ is a componentwise linear ideal.
	\item[(b)] $I_{\langle d\rangle}$ has a linear resolution.
	\item[(c)] $I_{\langle d\rangle}$ has linear quotients.
	\item[(d)] $I$ has componentwise linear quotients.
\end{itemize}
\end{Theorem}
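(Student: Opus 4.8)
The plan is to prove the cycle of implications $(a) \Rightarrow (b) \Rightarrow (c) \Rightarrow (d) \Rightarrow (a)$, using the discussion in the introduction to make two of the arrows essentially free. The implication $(d) \Rightarrow (a)$ is immediate: as noted in the introduction, an ideal with linear quotients generated in one degree has a linear resolution \cite{CH}, so if $I$ has componentwise linear quotients then each $I_{\langle j\rangle}$ has a linear resolution, which is precisely the definition of componentwise linear. The implication $(c) \Rightarrow (d)$ is the technical heart of the theorem; I will return to it. The implication $(a) \Rightarrow (b)$ is trivial by the definition of componentwise linear (take $q = d$), and $(b) \Rightarrow (c)$ will follow once we know the structure of $I_{\langle d\rangle}$: for a lexsegment ideal generated in one degree, the characterization of linear resolutions in \cite{ADH} shows (after the case analysis into completely and non-completely lexsegment ideals that the later sections carry out) that having a linear resolution forces the same combinatorial condition on $u$ and $v$ that yields linear quotients, so $(b)$ and $(c)$ are equivalent for this class. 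Thus the real content is the equivalence of $(c)$ with the ``componentwise'' statement $(d)$.

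For the key step $(c) \Rightarrow (d)$, the idea is to propagate the linear-quotients property from degree $d$ up to every degree $j > d$. Assume $I_{\langle d\rangle} = (\mathcal{L}(u,v))$ has linear quotients; since $I$ is componentwise lexsegment, for each $j \geq d$ the component $I_j$ is spanned by the lexsegment $\mathcal{L}(x_1^{j-d}u,\, vx_n^{j-d})$, so $I_{\langle j\rangle} = (\mathcal{L}(x_1^{j-d}u,\, v x_n^{j-d}))$ is itself a lexsegment ideal generated in degree $j$. The plan is to show that the combinatorial condition on the pair $(u,v)$ characterizing linear quotients in degree $d$ is inherited by the pair $(x_1^{j-d}u,\, vx_n^{j-d})$ in degree $j$. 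Concretely, one orders the generators of $\mathcal{L}(x_1^{j-d}u,\, vx_n^{j-d})$ lexicographically (decreasingly) and checks that for each generator $w$, the colon ideal $(\text{earlier generators}) : w$ is generated by variables; multiplying the bottom generator by $x_n$ and the top by $x_1$ only makes the lexsegment ``more favorable'' for this test, which is exactly why the componentwise-lexsegment hypothesis is the right one. I expect this verification to reduce, via the description of colon ideals of lexsegment ideals, to checking which variables $x_i$ satisfy $x_i w' \in \mathcal{L}(\ldots)$ for $w'$ strictly lexicographically larger, and this should go through uniformly in $j$.

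I expect the main obstacle to be the case split between completely lexsegment and non-completely lexsegment ideals. For completely lexsegment ideals the colon-ideal computation is clean, but for a lexsegment $\mathcal{L}(u,v)$ that is not completely lexsegment, $\mathcal{L}(u,v)$ and its associated completely lexsegment closure differ, and one must be careful that the minimal generators of $I_{\langle j\rangle}$ are exactly the monomials in $\mathcal{L}(x_1^{j-d}u, vx_n^{j-d})$ and not extra monomials forced in by taking the ideal. This is where the hypothesis $x_1 \mid u$ (which, as the authors remark, is harmless) and the specific shape of the shifted bounds $x_1^{j-d}u$ and $vx_n^{j-d}$ must be used to keep the combinatorics under control; I anticipate invoking the results of the final two sections (which analyze precisely these two classes) to handle the linear-quotients order in each case and to confirm that passing from degree $d$ to degree $j$ does not create obstructions. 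Once $(c)\Rightarrow(d)$ is established in both cases, the cycle closes and all four conditions are equivalent.
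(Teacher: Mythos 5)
Your overall architecture matches the paper's: the cycle $(a)\Rightarrow(b)\Rightarrow(c)\Rightarrow(d)\Rightarrow(a)$, with $(a)\Rightarrow(b)$ definitional, $(d)\Rightarrow(a)$ from the fact that an ideal with linear quotients generated in one degree has a linear resolution, and $(b)\Rightarrow(c)$ quoted from the characterization of lexsegment ideals with linear resolutions/quotients in \cite{ADH} and \cite{EOS}. Those three arrows are fine.

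The problem is $(c)\Rightarrow(d)$, which you correctly identify as the heart of the theorem but never actually prove. Your plan is a direct verification that each shifted lexsegment $\mathcal{L}(x_1^{j-d}u,\,vx_n^{j-d})$ has linear quotients by computing colon ideals uniformly in $j$; the operative sentences are ``I expect this verification to reduce\dots'' and ``I anticipate invoking the results of the final two sections.'' No colon ideal is computed, no ordering of generators is specified, and the concern you yourself raise (that the minimal generators of $I_{\langle j\rangle}$ might not be exactly the lexsegment in the non-completely case) is left unresolved. Note also that the final two sections of the paper concern the Gotzmann property, not linear quotients of shifted lexsegments, so they would not supply the missing verification.

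The paper closes this gap with a different and much shorter reduction, which is the idea your proposal is missing. If $\mathcal{L}(u,v)$ is a \emph{completely} lexsegment, then $I=I_{\langle d\rangle}$ is generated in a single degree with linear quotients, and \cite[Theorem 2.7]{SZ} immediately gives componentwise linear quotients. If $\mathcal{L}(u,v)$ is \emph{non-completely} with linear quotients (hence with a linear resolution), then \cite[Theorem 2.4]{ADH} forces $u=x_1x_{l+1}^{a_{l+1}}\cdots x_n^{a_n}$ and $v=x_lx_n^{d-1}$; one then observes that $\mathcal{L}(x_1u,vx_n)$ is a \emph{completely} lexsegment with a linear resolution by \cite[Theorem 1.3]{ADH}, hence has linear quotients by \cite{EOS}, and its successive shadows again generate completely lexsegment ideals with linear resolutions, hence with linear quotients. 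So the non-completely case is dispatched by promoting it to the completely case one degree up, rather than by a degree-by-degree colon-ideal computation. Without either that reduction or a worked-out version of your direct verification, the proof is incomplete.
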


\begin{proof} (a)$\Rightarrow$(b) Since $I$ is componentwise linear, the statement is straightforward.

(b)$\Rightarrow$(c) This was proved in \cite[Theorem 1.1 and Theorem 2.1]{EOS} by analyzing separately the cases of completely and non-completely lexsegment ideals.

(c)$\Rightarrow$(d) We separately treat the case of completely and of non-completely lexsegment ideals. Firstly, let us assume that $I_{\langle d\rangle}$ is a completely lexsegment ideal with linear quotients. Hence $I=I_{\langle d\rangle}$ and $I$ has componentwise linear quotients \cite[Theorem 2.7]{SZ}.

If $I_{\langle d\rangle}=(\mathcal{L}(u,v))$ is a non-completely lexsegment ideal with linear quotients, then $I_{\langle d\rangle}$ has a linear resolution and, by \cite[Theorem 2.4]{ADH}, $u$ and $v$ must have the form
	\[u=x_1x_{l+1}^{a_{l+1}}\cdots x_n^{a_n}\ \mbox{and}\ v=x_lx_n^{d-1}
\]
for some $l$, $2\leq l<n$. Therefore, $\nu_1(u)=1$ and $\nu_1(v)=0$. Here, for a monomial $m=x_1^{a_1}\cdots x_n^{a_n}$, we denoted by $\nu_i(m)$ the exponent of the variable $x_i$, that is $\nu_i(m)=a_i$.

If we look at the ends of the lexsegment $\mathcal{L}(x_1u,vx_n)$, we have $\nu_1(x_1u)=2$, $\nu_1(vx_n)=0$ and one may easily see that $(\mathcal{L}(x_1u,vx_n))$ is a completely lexsegment ideal. By \cite[Theorem 1.3]{ADH}, $(\mathcal{L}(x_1u,vx_n))$ has a linear resolution and, using \cite[Theorem 2.1]{EOS}, $(\mathcal{L}(x_1u,vx_n))$ has linear quotients. Since $(\mathcal{L}(x_1u,vx_n))$ is a completely lexsegment ideal with a linear resolution, the ideals generated by the shadows of $\mathcal{L}(x_1u,vx_n)$ are completely lexsegment ideals with linear resolutions, hence they have linear quotients by \cite[Theorem 2.1]{EOS}. Therefore, $I$ has componentwise linear quotients.  

(d)$\Rightarrow$(a) Since any ideal with linear quotients generated in one degree has a linear resolution, the statement follows by comparing the definitions.
\end{proof}

\section{Gotzmann completely lexsegment ideals}

In this section we are going to characterize the completely lexsegment ideals gene- rated in degree $d$ which are Gotzmann.

Firstly we recall another operator connected with the binomial expansion of an integer.

Let $a={a_d\choose d}+\ldots +{a_j\choose j}$, $a_d>a_{d-1}>\ldots>a_j\geq j\geq1$, be the binomial expansion of $a$ with respect to $d$. Then
$$a^{(d)}={a_d\choose d+1}+\ldots +{a_j\choose j+1}.$$

We obviously have the following equality:
$$a^{\langle d\rangle}=a+a^{(d)}.$$

\begin{Lemma}\label{b(d)=c(d)} Let $c>b>0$ be two integers. Let
	$b={b_d\choose d}+\ldots +{b_j\choose j},$ $b_d>b_{d-1}>\ldots>b_j\geq j\geq1$, and $c={c_d\choose d}+\ldots +{c_i\choose i},$ $c_d>c_{d-1}>\ldots>c_i\geq i\geq1$, be the $d$-binomial expansions of $b$ and $c$. The following statements are equivalent:
\begin{itemize}
	\item[(i)] $b^{(d)}=c^{(d)}$;
	\item[(ii)] $j\geq2$ and $c-b\leq j-1$. 
\end{itemize}
\end{Lemma}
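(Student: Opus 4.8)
The strategy is to understand how the operator $a\mapsto a^{(d)}$ changes when $a$ is replaced by $a+1$; once this is clear, both implications fall out at once. Write $a=\binom{a_d}{d}+\binom{a_{d-1}}{d-1}+\ldots+\binom{a_j}{j}$ for the $d$-binomial expansion of $a$ and call $j$ its \emph{bottom index}. The key observation is that if $j\ge 2$, then $\binom{j-1}{j-1}=1$ and
$$a+1=\binom{a_d}{d}+\ldots+\binom{a_j}{j}+\binom{j-1}{j-1}$$
is again a valid $d$-binomial expansion (its exponents are strictly decreasing since $a_j\ge j>j-1$, and $j-1\ge 1$). Because $\binom{j-1}{j}=0$ this gives $(a+1)^{(d)}=a^{(d)}$, while the bottom index of $a+1$ equals $j-1$. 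Iterating, whenever $a$ has bottom index $j\ge 2$ the integers $a,a+1,\ldots,a+j-1$ have bottom indices $j,j-1,\ldots,1$ and all share the same value $a^{(d)}$.

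Granting this, the implication (ii)$\Rightarrow$(i) is immediate: if $j\ge 2$ and $0<c-b\le j-1$, then $c\in\{b+1,\ldots,b+j-1\}$, hence $c^{(d)}=b^{(d)}$. For the converse I will need the complementary fact that \emph{if the bottom index of $a$ equals $1$, then $(a+1)^{(d)}>a^{(d)}$.}

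To prove this last fact, write $a=\binom{a_d}{d}+\ldots+\binom{a_j}{j}$ with $j=1$ and let $t\ge 1$ be the length of the maximal run of consecutive exponents at the bottom, i.e.\ $a_s=a_1+s-1$ for $1\le s\le t$, with $t=d$ or $a_{t+1}>a_1+t$. Using the hockey-stick identity
$$\sum_{s=1}^{t}\binom{m+s-1}{s}=\binom{m+t}{t}-1,$$
an immediate induction on $t$ via Pascal's rule, applied with $m=a_1$, the bottom $t$ summands of $a$ add up to $\binom{a_1+t}{t}-1$, so $a+1=\binom{a_d}{d}+\ldots+\binom{a_{t+1}}{t+1}+\binom{a_1+t}{t}$, the terms above the run being unchanged. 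Applying the same identity with the lower indices shifted up by one to evaluate the difference of the two $(d)$-images, one finds $(a+1)^{(d)}-a^{(d)}=a_1\ge 1$. This small computation is really the only obstacle; everything else is bookkeeping with binomial expansions.

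Finally, the two observations together show that $a\mapsto a^{(d)}$ is non-decreasing, each unit step either fixing it (bottom index $\ge 2$) or strictly raising it (bottom index $1$). Now assume (i), so $b^{(d)}=c^{(d)}$ with $c>b$. If the bottom index $j$ of $b$ were $1$, we would get $b^{(d)}<(b+1)^{(d)}\le c^{(d)}$, a contradiction; hence $j\ge 2$. By the iteration above, $b+j-1$ has bottom index $1$, so $(b+j)^{(d)}>(b+j-1)^{(d)}=b^{(d)}$, and monotonicity then forces $c^{(d)}>b^{(d)}$ for every $c\ge b+j$. Therefore $c\le b+j-1$, that is $c-b\le j-1$, which together with $j\ge 2$ is exactly (ii).
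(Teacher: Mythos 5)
Your proof is correct, and it takes a genuinely different route from the paper's. The paper compares the expansions of $b$ and $c$ head-on: it invokes the lexicographic comparison of Macaulay expansions (\cite[Lemma 4.2.7]{BH}) to locate the first index $s$ where the two expansions differ, then rules out $s\geq j$ by a chain of binomial inequalities in each direction, concluding that $c$'s expansion is $b$'s expansion with a tail of terms $\binom{k}{k}$ appended. You instead study the single-step behaviour of $a\mapsto a^{(d)}$: appending $\binom{j-1}{j-1}$ shows the map is constant as the bottom index descends from $j$ to $1$, and the hockey-stick computation $(a+1)^{(d)}-a^{(d)}=a_1\geq 1$ when the bottom index is $1$ shows it then strictly increases. (I checked the computation: with $m=a_1$ one has $\sum_{s=1}^{t}\binom{m+s-1}{s+1}=\binom{m+t}{t+1}-m$, so the difference is indeed $a_1$.) Your argument buys a cleaner global picture --- $a\mapsto a^{(d)}$ is non-decreasing and its level sets are exactly the maximal runs $\{a,\ldots,a+j-1\}$ --- from which both implications drop out at once, whereas the paper's approach is more local but avoids the hockey-stick identity and leans on a standard cited lemma. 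Both are complete; yours is arguably the more conceptual and would generalize more readily to statements about the fibres of $a\mapsto a^{(d)}$.
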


\begin{proof} Let $b^{(d)}=c^{(d)}$. Since $c>b$, by \cite[Lemma 4.2.7]{BH}, there exists $s\leq d$ such that $c_d=b_d,\ldots,c_{s+1}=b_{s+1}$, and $c_s>b_s$. We obviously have $s+1\geq j$. Let us suppose that $s\geq j$. Since $c_s\geq b_s+1$, we get:
	\[{c_s\choose s+1}\geq{b_s+1\choose s+1}\geq{b_s\choose s+1}+{b_{s-1}\choose s}+\ldots+{b_j\choose j+1}+{b_j\choose j}>\]
	\[>{b_s\choose s+1}+{b_{s-1}\choose s}+\ldots+{b_j\choose j+1}
\]
This leads to the inequality $c^{(d)}>b^{(d)}$, which contradicts our hypothesis. Indeed, we have
	\[c^{(d)}\geq{c_d\choose d+1}+\ldots+{c_{s+1}\choose s+2}+{c_s\choose s+1}>\]
  \[>{b_d\choose d+1}+\ldots+{b_{s+1}\choose s+2}+{b_s\choose s+1}+\ldots+{b_j\choose j+1}=b^{(d)}.\]
Therefore we must have $s=j-1$. Hence $j\geq2$ and $c$ has the binomial expansion
	\[c={c_d\choose d}+\ldots+{c_j\choose j}+{c_{j-1}\choose j-1}+\ldots+{c_i\choose i}.
\]
Using the equality $c^{(d)}=b^{(d)}$ we get
	\[{c_{j-1}\choose j}+\ldots+{c_i\choose i+1}=0,
\]
which implies that $c_{j-1}=j-1,\ldots,c_i=i$. Therefore $c=b+j-i\leq b+j-1$, which proves (ii).
 
Now, let $j\geq2$ and $c\leq b+j-1$. As in the first part of the proof, let $s\leq d$ be an integer such that $c_d=b_d,\ldots,c_{s+1}=b_{s+1}$, and $c_s>b_s$. If $s\geq j$, we get the following inequalities:
	\[c={c_d\choose d}+\ldots+{c_{s+1}\choose s+1}+{c_s\choose s}+\ldots+{c_i\choose i}\geq\]
	\[\geq{b_d\choose d}+\ldots+{b_{s+1}\choose s+1}+{b_s+1\choose s}+{c_{s-1}\choose s-1}+\ldots+{c_i\choose i}\geq\]
	\[\geq{b_d\choose d}+\ldots+{b_{s+1}\choose s+1}+{b_{s}\choose s}+\ldots+{b_j\choose j}+{b_j\choose j-1}+{c_{s-1}\choose s-1}+\ldots+{c_i\choose i}=
\]
	\[=b+{b_j\choose j-1}+{c_{s-1}\choose s-1}+\ldots+{c_i\choose i}\geq b+j-i+s.
\]
Since, by hypothesis, $c-b\leq j-1$, we have $j-1\geq j-i+s$, thus $s\leq i-1$, a contradiction. Hence, $s=j-1$. Then we have:

	\[c^{(d)}={c_d\choose d+1}+\ldots+{c_{s+1}\choose s+2}+{c_s\choose s+1}+\ldots+{c_{i}\choose i+1}=
\]
	\[={b_d\choose d+1}+\ldots+{b_{j}\choose j+1}+{c_{j-1}\choose s}+\ldots+{c_{i}\choose i+1}=\]\[=b^{(d)}+{c_{j-1}\choose j}+\ldots+{c_{i}\choose i+1}
\]
If we assume that $c_{j-1}\geq j$, then it follows that ${c_{j-1}\choose j-1}\geq j$. Looking at the $d$-binomial expansions of $b$ and $c$, we get $c-b\geq j$, contradiction. Hence $c_{j-1}=j-1$. This equality implies also the equalities $c_k=k$, for all $i\leq k\leq j-2$. We obtain the following binomial expansion of $c$:
	\[c={c_d\choose d}+\ldots+{c_{j}\choose j}+{j-1\choose j-1}+\ldots+{i\choose i}.
\]
Then
\[c^{(d)}={c_d\choose d+1}+\ldots+{c_{j}\choose j+1}={b_d\choose d+1}+\ldots+{b_{j}\choose j+1}=b^{(d)}.
\] 
\end{proof}

\begin{Lemma}\label{c^d=0 dnd c<egal b}
Let $c>0$ be an integer with the binomial expansion
\[c={c_d\choose d}+\ldots+{c_i\choose i},c_d>\ldots>c_i\geq i\geq 1.
\]
The following statements are equivalent:
\begin{itemize}
	\item [(a)] $c^{(d)}=0$; 
	\item [(b)] $c\leq d$. 
\end{itemize}
\end{Lemma}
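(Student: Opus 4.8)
The plan is to unwind the definition of the operator $a^{(d)}$ and reduce the equivalence to a clean statement about binomial expansions. First I would establish the implication (b)$\Rightarrow$(a): if $c\leq d$, then writing the $d$-binomial expansion of $c$ explicitly, the largest term $\binom{c_d}{d}$ must already have $c_d \leq d$ forced by $c\leq d$ — indeed if $c_d\geq d+1$ then $\binom{c_d}{d}\geq d+1 > c$, impossible. Combined with the strict decrease $c_d>c_{d-1}>\dots>c_i\geq i\geq 1$, the only way to fit the expansion inside $\{1,\dots,d\}$ with $c\leq d$ is that the expansion has the shape $c = \binom{d}{d}+\binom{d-1}{d-1}+\dots+\binom{i}{i}$ for some $i$ (so $c = d-i+1\leq d$), i.e. every binomial coefficient appearing equals $1$. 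Then $c^{(d)} = \binom{d}{d+1}+\dots+\binom{i}{i+1} = 0$ since each $\binom{c_k}{k+1}=\binom{k}{k+1}=0$. This direction is essentially a normal-form computation.

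For (a)$\Rightarrow$(b) I would argue contrapositively: assume $c\geq d+1$ and show $c^{(d)}>0$. The point is that $c\geq d+1$ forces $c_d\geq d+1$: indeed the maximal value of $\binom{c_d}{d}+\binom{c_{d-1}}{d-1}+\dots$ under the constraint $c_d=d$ is $\binom{d}{d}+\binom{d-1}{d-1}+\dots+\binom{1}{1}=d$, so if $c>d$ the leading index must satisfy $c_d\geq d+1$. But then $\binom{c_d}{d+1}\geq \binom{d+1}{d+1}=1>0$, and since all terms in $c^{(d)}=\sum_k\binom{c_k}{k+1}$ are non-negative, $c^{(d)}\geq\binom{c_d}{d+1}>0$. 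Hence $c^{(d)}\neq 0$.

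Alternatively, and perhaps more in the spirit of the preceding lemma, one can invoke the identity $a^{\langle d\rangle}=a+a^{(d)}$: then $c^{(d)}=0$ is equivalent to $c^{\langle d\rangle}=c$. Since $c^{\langle d\rangle}$ is the Hilbert-function bound from Macaulay's theorem, $c^{\langle d\rangle}=c$ holds precisely when the lexsegment of size $c$ in degree $d$ has its shadow of the same size $c$, and a short combinatorial check shows this happens exactly when $c\leq d$ (the lexsegment then consists of all monomials $\geq x_1^{d-c+1}x_{?}\cdots$, but more simply, $c = \dim_k S_d$-restricted case). I would probably keep the direct binomial argument since it is self-contained and avoids appealing to the geometric interpretation.

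The main obstacle is minor: it is just making precise the normal-form claim that a $d$-binomial expansion with value $\leq d$ must consist entirely of terms $\binom{k}{k}=1$ with consecutive indices down from $d$. This follows from uniqueness of the Macaulay expansion together with the observation that $\binom{d}{d}+\binom{d-1}{d-1}+\dots+\binom{d-c+1}{d-c+1}=c$ is itself a valid $d$-binomial expansion (the indices strictly decrease and the bottom index $d-c+1\geq 1$ since $c\leq d$), so by uniqueness it is \emph{the} expansion of $c$. Everything else is a one-line inequality. I expect the whole proof to be four or five lines.
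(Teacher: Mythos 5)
Your proof is correct and takes essentially the same route as the paper: both directions are handled by directly unwinding the $d$-binomial expansion, showing that $c\leq d$ forces the normal form $c=\binom{d}{d}+\dots+\binom{i}{i}$ and that $c^{(d)}=0$ forces $c_k=k$ for every index. Your contrapositive phrasing of (a)$\Rightarrow$(b) and the extra uniqueness justification of the normal form are cosmetic differences only.
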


\begin{proof}
Let $c\leq d$. Then $c$ has the following binomial expansion with respect to $d$:
\[c={d\choose d}+\ldots+{i\choose i}, \mbox{ for some }i\geq 1.
\]
Hence $c^{(d)}=0$.

Now let $c^{(d)}=0$. We get
\[{c_d\choose d+1}+\ldots+{c_i\choose i+1}=0,\mbox{ which implies}\]
$$c_d=d,\ldots,\ c_i=i.$$

It follows $c=d-(i-1)\leq d$.
\end{proof}

\begin{Theorem}\label{compG} Let $u,v\in \mathcal M_d$, $x_1\mid u$ such that $I=(\mathcal{L}(u,v))$ is a completely lexsegment ideal of $S$ which is not an initial lexsegment ideal. Let $j$ be the exponent of the variable $x_n$ in $v$ and $a=|\mathcal M_d\setminus\mathcal{L}^i(u)|$. The following statements are equivalent:
\begin{itemize}
	\item [(a)] $I$ is a Gotzmann ideal;
	\item [(b)] $a\geq{n+d-1\choose d}-(j+1)$.
\end{itemize}
\end{Theorem}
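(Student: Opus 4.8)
The plan is to convert the Gotzmann property into a single identity between Macaulay expansions and then read it off from Lemma~\ref{b(d)=c(d)}. Put $N={n+d-1\choose d}$, $N'={n+d\choose d+1}$ and, parallel to $a=|\mathcal M_d\setminus\mathcal L^i(u)|$, let $s=|\mathcal M_d\setminus\mathcal L^i(v)|$; then $|\mathcal L^i(u)|=N-a$, $|\mathcal L^i(v)|=N-s$ and $|\mathcal L(u,v)|=|\mathcal L^i(v)|-|\mathcal L^i(u)|+1=a-s+1$. Recall the extremal case of Macaulay's theorem: the shadow of an initial lexsegment of cardinality $\ell$ in degree $d$ is again an initial lexsegment, of cardinality $N'-(N-\ell)^{\langle d\rangle}$. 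By the characterization recalled in the Introduction, together with Gotzmann's persistence theorem (both ideals are generated in degree $d$), $I$ is Gotzmann iff $H(I,d+1)=H\big((I^{lex})_{\langle d\rangle},d+1\big)$, and here $H\big((I^{lex})_{\langle d\rangle},d+1\big)=N'-(N-(a-s+1))^{\langle d\rangle}=N'-(N-a+s-1)^{\langle d\rangle}$. On the other hand, since $\mathcal L(u,v)$ is completely lexsegment its shadow is a lexsegment, hence equals $\mathcal L(x_1u,x_nv)$ (its extreme elements being $x_1u$ and $x_nv$); taking lexsegments in degree $d+1$,
\[H(I,d+1)=|\mathcal L(x_1u,x_nv)|=|\mathcal L^i(x_nv)|-|\mathcal L^i(x_1u)|+1.\]
Multiplication by $x_1$ identifies the degree-$d$ monomials $\geq u$ with the degree-$(d+1)$ monomials $\geq x_1u$ (using $x_1\mid u$), so $|\mathcal L^i(x_1u)|=N-a$; and $\mathcal L^i(x_nv)=\shad(\mathcal L^i(v))$ has $N'-s^{\langle d\rangle}$ elements. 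Hence $H(I,d+1)=N'-s^{\langle d\rangle}-(N-a)+1$.

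Equating the two expressions and abbreviating $M:=N-a+s-1$ (note $M=|\mathcal M_d\setminus\mathcal L(u,v)|$), the Gotzmann condition becomes $M^{\langle d\rangle}-s^{\langle d\rangle}=M-s$, which by $x^{\langle d\rangle}=x+x^{(d)}$ is equivalent to $M^{(d)}=s^{(d)}$. Since $I$ is not an initial lexsegment ideal we have $u\ne x_1^d$ and $v\ne x_n^d$ (otherwise $\mathcal L(u,v)$ is an initial lexsegment), hence $|\mathcal L^i(u)|\geq2$ and $|\mathcal L^i(v)|\leq N-1$, i.e. $a\leq N-2$ and $s\geq1$. Therefore $s>0$, $M-s=N-a-1\geq1$ and $M>s$, so Lemma~\ref{b(d)=c(d)} applies with $c=M$, $b=s$ and yields: $I$ is Gotzmann if and only if the bottom index $j_0$ of the $d$-binomial expansion of $s$ satisfies $j_0\geq2$ and $M-s\leq j_0-1$, i.e. $a\geq N-j_0$.

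The crux is to show $j_0=\nu_n(v)+1=j+1$. Write $v=x_1^{\alpha_1}\cdots x_n^{\alpha_n}$, put $M_k=\alpha_k+\cdots+\alpha_n$ (so $M_1=d$ and $M_n=j$), and for $c\in\{j+1,\dots,d\}$ let $k(c)$ be the largest index $k$ with $M_k\geq c$; then $1\leq k(c)\leq n-1$ and $k(c)$ is non-increasing in $c$. Counting the degree-$d$ monomials strictly below $v$ by the first variable in which they disagree with $v$ (for each $k$, those with exponent $\alpha_i$ in $x_i$ for $i<k$ and exponent $<\alpha_k$ in $x_k$) and reindexing the resulting binomial sums gives
\[s=\sum_{c=j+1}^{d}{c+n-1-k(c)\choose c}.\]
Setting $e_c=c+n-1-k(c)$, one has $e_{j+1}\geq j+1$ and $e_{c+1}-e_c=1+k(c)-k(c+1)\geq1$, so $e_d>e_{d-1}>\cdots>e_{j+1}\geq j+1$; by uniqueness of the Macaulay expansion this display is the $d$-binomial expansion of $s$, and its bottom index equals $j+1$. (Alternatively, induct on $n$, splitting off the variable $x_1$.) This identity is the main obstacle; the rest is bookkeeping with Macaulay's theorem and Lemma~\ref{b(d)=c(d)}.

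Substituting $j_0=j+1$, we conclude that $I$ is Gotzmann if and only if $j+1\geq2$ and $a\geq N-(j+1)$. If $j=0$ both conditions fail (the inequality because $a\leq N-2$), while if $j\geq1$ the first is automatic; hence in every case $I$ is Gotzmann if and only if $a\geq{n+d-1\choose d}-(j+1)$, which is statement (b).
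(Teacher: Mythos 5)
Your overall route is the same as the paper's: translate the Gotzmann condition into the identity $M^{(d)}=s^{(d)}$ (the paper's $c^{(d)}=b^{(d)}$) and then invoke Lemma~\ref{b(d)=c(d)}, after identifying the bottom index of the Macaulay expansion of $s=|\mathcal M_d\setminus\mathcal L^i(v)|$ as $j+1$. Your counting argument for that expansion is correct and actually supplies a detail the paper only asserts. However, there is one genuine error: the claim that ``$I$ not an initial lexsegment ideal'' forces $v\neq x_n^d$. It does not. If $v=x_n^d$ and $u\neq x_1^d$, then $\mathcal L(u,v)=\{w\in\mathcal M_d: w\leq u\}$ is a \emph{final} lexsegment, which is a perfectly admissible completely lexsegment ideal that is not initial; the hypothesis only rules out $u=x_1^d$. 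In this case $s=0$, so Lemma~\ref{b(d)=c(d)} (stated for $c>b>0$) does not apply, the $d$-binomial expansion of $s$ is empty, and your quantity $j_0$ is undefined. As written, your proof therefore does not cover final lexsegment ideals.

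The gap is easy to close, and the paper does exactly this: when $b=0$ one uses Lemma~\ref{c^d=0 dnd c<egal b} instead, so the condition $M^{(d)}=0$ becomes $M\leq d$, i.e.\ ${n+d-1\choose d}-a-1\leq d$, which is $a\geq{n+d-1\choose d}-(d+1)$; since $j=d$ here, this is precisely condition (b). You should delete the false parenthetical, keep only $u\neq x_1^d$ (hence $M>s$), and split off the case $s=0$ with this separate (one-line) argument. With that repair your proof coincides with the paper's.
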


\begin{proof} Let $b=|\mathcal M_d\setminus\mathcal{L}^i(v)|$ and $w\in \mathcal M_d$ such that $|\mathcal{L}(u,v)|=|\mathcal{L}^i(w)|$. We denote $c=|\mathcal M_d\setminus\mathcal{L}^i(w)|$. Then $|\mathcal{L}^i(w)|=|\mathcal{L}^i(v)|-|\mathcal{L}^i(u)|+1=a-b+1$, which yields:
	\[{n+d-1\choose d}-c=a-b+1,
\]
that is
	\begin{eqnarray}c={n+d-1 \choose d}-(a+1)+b. \label{1}
\end{eqnarray}
Since $I$ is completely, $I$ is Gotzmann if and only if 
	\begin{eqnarray}|\mathcal{L}^i(wx_n)|=|\mathcal{L}(ux_1,vx_n)|=|\mathcal{L}^i(vx_n)|-|\mathcal{L}^i(ux_1)|+1.\label{2}
\end{eqnarray}
Since $x_1\mid u$, we have $|\mathcal{L}^i(ux_1)|=|\mathcal{L}^i(u)|$. Therefore, the equality (\ref{2}) is equivalent to
	\[|\mathcal{L}^i(wx_n)|=|\mathcal{L}^i(vx_n)|-|\mathcal{L}^i(u)|+1
\]
that is
	\[|\mathcal M_{d+1}|-c^{\langle d\rangle}=|\mathcal M_{d+1}|-b^{\langle d\rangle}-(|\mathcal M_d|-a)+1.
\]
Here we used the well known formula
	\[|\mathcal{M}_{d+1}\setminus\shad{\mathcal{L}}|=r^{\langle d\rangle},
\]
where $\mathcal{L}\subset\mathcal{M}_d$ is an initial lexsegment and $r=|\mathcal{M}_d\setminus\mathcal{L}|$ \cite{BH}.
Hence $I$ is Gotzmann if and only if
	\begin{eqnarray}c^{\langle d\rangle}=b^{\langle d\rangle}+{n+d-1\choose d}-a-1.\label{3}
\end{eqnarray}
By using (\ref{1}), we obtain
	\[c^{\langle d\rangle}=b^{\langle d\rangle}+c-b,
\]
that is
	\[c^{\langle d\rangle}-c=b^{\langle d\rangle}-b,
\]
which is equivalent to
\begin{eqnarray} c^{(d)}=b^{(d)}.\label{4}
\end{eqnarray}
Let us firstly consider the case $b=0$, that is $v=x_n^d$ and $I$ is the final lexsegment determined by $u$. The equation (\ref{4}) becomes
\begin{eqnarray} c^{(d)}=0.\label{5}
\end{eqnarray}
By Lemma \ref{c^d=0 dnd c<egal b}, $c^{(d)}=0$ if and only if $c\leq d$. 

For the case $b>0$, the monomial $v$ has the form 
	\[v=x_{l_1}\cdots x_{l_{d-j}}x_n^j,
\]
for some $j\geq 0$ and $1\leq l_1\leq\ldots\leq l_{d-j}\leq n-1$. The $d$-binomial expansion of $b$ is
	\[b={n-{l_1}+d-1\choose d}+\ldots+{n-l_{d-j}+j\choose j+1}.
\]
By Lemma \ref{b(d)=c(d)}, the equality (\ref{4}) holds if and only if $j\geq 1$ and $c-b\leq j$. Then we have obtained $c-b\leq j$ for any $b$. By (\ref{1}), this inequality holds if and only if ${n+d-1\choose d}-(a+1)\leq j$, that is
	\[a\geq{n+d-1\choose d}-(j+1).
\]
\end{proof}
\section{Gotzmann non-completely lexsegment ideals}

Firstly, we recall the Taylor resolution. Let $I$ be a monomial ideal of $S$ with the minimal monomial generating set $G(I)=\{u_1,\ldots, u_r\}$. The Taylor resolution $(T_{\bullet}(I),d_{\bullet})$ of $I$ is defined as follows. Let $L$ be the free $S$-module with the basis $\{e_1,\ldots,e_r\}$. Then $T_q(I)=\bigwedge\limits^{q+1}L$ for $0\leq q\leq r-1$ and $d_q:T_q(I)\rightarrow T_{q-1}(I)$ for $1\leq q\leq r-1$ is defined as follows
	\[d_q(e_{i_0}\wedge\ldots\wedge e_{i_q})=\sum_{s=0}^q(-1)^s\frac{\lcm(u_{i_0},\ldots,u_{i_q})}{\lcm(u_{i_0},\ldots ,\check{u}_{i_s},\ldots,u_{i_q})}e_{i_0}\wedge\ldots\wedge\check{e}_{i_s}\wedge\ldots\wedge e_{i_q}. 
\]
The augmentation $\varepsilon:T_0\rightarrow I$ is defined by $\varepsilon(e_i)=u_i$ for all $1\leq i\leq q$. It is known that, in general, the Taylor resolution is not minimal. M. Okudaira and Y. Takayama characterized all the monomial ideals with linear resolutions whose Taylor resolutions are minimal.
\begin{Theorem}\label{ot}\cite{OT} Let $I$ be a monomial ideal with linear resolution. The following conditions are equivalent:
\begin{itemize}
	\item[(i)] The Taylor resolution of $I$ is minimal;
	\item[(ii)] $I=m\cdot (x_{i_1},\ldots,x_{i_l})$ for some $1\leq {i_1}<\ldots<i_l\leq n$ and for a monomial $m$. 
\end{itemize}
\end{Theorem}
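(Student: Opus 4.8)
For the implication (ii)$\Rightarrow$(i), suppose $I=m\cdot(x_{i_1},\dots,x_{i_l})$, so that $G(I)=\{u_1,\dots,u_l\}$ with $u_s=mx_{i_s}$. For any subset $\{s_0,\dots,s_q\}$ one has $\lcm(u_{s_0},\dots,u_{s_q})=m\,x_{i_{s_0}}\cdots x_{i_{s_q}}$, since the $x_{i_s}$ are distinct variables, and hence every coefficient occurring in the Taylor differential,
\[
\frac{\lcm(u_{s_0},\dots,u_{s_q})}{\lcm(u_{s_0},\dots,\check u_{s_t},\dots,u_{s_q})}=x_{i_{s_t}},
\]
is a variable, in particular an element of $\mm$. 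Thus every entry of every matrix of $(T_\bullet(I),d_\bullet)$ lies in $\mm$, so the Taylor resolution of $I$ is minimal (the degenerate case $l=1$, where $I$ is principal, being immediate).

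For (i)$\Rightarrow$(ii) write $G(I)=\{u_1,\dots,u_r\}$. Since $I$ has a linear resolution it is generated in a single degree $d$, so $\deg u_s=d$ for all $s$. Because the Taylor resolution $(T_\bullet(I),d_\bullet)$ is minimal it is, up to isomorphism, the minimal graded free resolution of $I$; as $T_q=\bigwedge^{q+1}L$ contributes exactly one generator $e_\sigma$ of internal degree $\deg\lcm(u_i\ :\ i\in\sigma)$ for each $(q{+}1)$-subset $\sigma\subseteq\{1,\dots,r\}$, linearity of the resolution forces
\[
\deg\lcm(u_i\ :\ i\in\sigma)=d+|\sigma|-1\qquad\text{for every nonempty }\sigma\subseteq\{1,\dots,r\}.
\]
The whole argument now consists of extracting the combinatorial content of this single family of equalities.

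The case $|\sigma|=2$ gives $\deg\lcm(u_i,u_j)=d+1$ for $i\neq j$, hence $u_i$ and $u_j$ differ, above $\gcd(u_i,u_j)$, by exactly one variable on each side, each with multiplicity $1$. Put $m=\gcd(u_1,\dots,u_r)$, $v_i=u_i/m$, $e=d-\deg m$; then $\gcd(v_1,\dots,v_r)=1$, the $v_i$ still pairwise differ by a single such ``swap'', and $\deg\lcm(v_i\ :\ i\in\sigma)=e+|\sigma|-1$ for all nonempty $\sigma$. Since $\gcd(v_1,\dots,v_r)=1$, a variable dividing some $v_i$ to a power $\geq 2$ could never drop to exponent $0$ in another $v_j$ by a one-variable change, so every $v_i$ is squarefree. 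Identifying $v_i$ with its support $A_i\subseteq\{1,\dots,n\}$, the data become: $|A_i|=e$, $|A_i\triangle A_j|=2$ for $i\neq j$, $\bigl|\bigcup_{i\in\sigma}A_i\bigr|=e+|\sigma|-1$ for every nonempty $\sigma$, and $\bigcap_iA_i=\varnothing$. I would now prove the rigidity statement that forces the conclusion: writing $A_j=(A_1\setminus\{p_j\})\cup\{q_j\}$ for $j\geq 2$ (possible since $|A_1\cap A_j|=e-1$), the size constraints $|A_i\cup A_j|=e+1$ on pairs $\{i,j\}$ together with $|A_1\cup A_i\cup A_j|=e+2$ on triples $\{1,i,j\}$ compel $p_i=p_j$ for all $i,j\geq 2$ and the $q_j$ to be pairwise distinct and not in $A_1\setminus\{p\}$. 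Hence there is one element $p$ and a set $B=A_1\setminus\{p\}$ with $A_i=B\cup\{q_i\}$ for all $i$ (setting $q_1:=p$) and the $q_i$ distinct, so $\bigcap_iA_i=B$; the hypothesis $\bigcap_iA_i=\varnothing$ then gives $B=\varnothing$, i.e.\ $e=1$. Therefore each $v_i$ is a single variable $x_{q_i}$ with the $q_i$ distinct, whence $u_i=mx_{q_i}$ and $I=m\cdot(x_{q_1},\dots,x_{q_r})$, as required.

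I expect the set-system rigidity step to be the main obstacle — specifically, pinning down that the \emph{same} element $p$ is removed in passing from $A_1$ to every other $A_j$ (the rest being bookkeeping with the union-size equalities, plus the easy check of the cases $r\leq 2$). Once that is established, the identity $\bigcap_iA_i=B=\varnothing$ and the resulting shape of $I$ follow at once.
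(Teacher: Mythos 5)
Your proof is correct, but note that there is nothing in the paper to compare it against: Theorem \ref{ot} is quoted from Okudaira--Takayama \cite{OT} and the paper gives no proof of it, so you have supplied a self-contained argument where the authors simply cite. Both directions check out. For (ii)$\Rightarrow$(i) the observation that every Taylor coefficient is a single variable (hence lies in $\mm$) is exactly what minimality requires. For (i)$\Rightarrow$(ii), the key reduction — minimality plus linearity force $\deg\lcm(u_i : i\in\sigma)=d+|\sigma|-1$ for every $\sigma$ — is right, and the combinatorial rigidity you flag as the main obstacle does go through: for $j\geq 2$ write $A_j=(A_1\setminus\{p_j\})\cup\{q_j\}$; the triple condition $|A_1\cup A_i\cup A_j|=|A_1\cup\{q_i,q_j\}|=e+2$ forces $q_i\neq q_j$, and then the pair condition $|A_i\cup A_j|=e+1$ rules out $p_i\neq p_j$ (which would make the union contain all of $A_1\cup\{q_i,q_j\}$, of size $e+2$). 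With a common $p$ one gets $\bigcap_iA_i=A_1\setminus\{p\}$, which must be empty since $\gcd(v_1,\dots,v_r)=1$, giving $e=1$. The squarefreeness step is also sound: if $x^2\mid v_i$, pick $j$ with $x\nmid v_j$ (possible as the gcd is $1$); then $v_i/\gcd(v_i,v_j)$ has degree $\geq 2$, contradicting $\deg\gcd(v_i,v_j)=e-1$. The only caveats are degenerate ones you already acknowledge ($r\leq 2$, principal $I$), which are immediate.
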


In \cite{HHMT}, the componentwise linear monomial ideals whose Taylor resolutions are minimal are described.
\begin{Theorem}\label{minTay}\cite{HHMT} Let $I$ be a componentwise linear monomial ideal of $S$. The following conditions are equivalent:
\begin{itemize}
	\item[(i)] The Taylor resolution of $I$ is minimal;
	\item[(ii)] $\max\{m(u):u\in G(I)\}=|G(I)|$;
	\item[(iii)] $I$ is a Gotzmann ideal with $|G(I)|\leq n$.
\end{itemize}
\end{Theorem}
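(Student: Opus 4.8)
The plan is to prove the cyclic chain (i) $\Rightarrow$ (iii) $\Rightarrow$ (ii) $\Rightarrow$ (i). Write $G(I)=\{u_1,\dots,u_r\}$, so that $r=|G(I)|$, and recall the standard criterion for minimality of the Taylor complex: its differentials have all entries in $\mm$ if and only if $u_i\nmid\lcm(u_j:j\in\sigma\setminus\{i\})$ for every $\sigma\subseteq\{1,\dots,r\}$ with $|\sigma|\geq2$ and every $i\in\sigma$. The two external inputs I expect to use are Theorem \ref{ot}, applied to the graded pieces $I_{\langle j\rangle}$ --- each of which has a linear resolution because $I$ is componentwise linear --- and the Herzog--Hibi characterization of Gotzmann ideals by their graded Betti numbers recalled above. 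A single structural statement will carry the whole argument: I want to show that the Taylor resolution of a componentwise linear monomial ideal $I$ is minimal precisely when every graded piece $I_{\langle j\rangle}$ has the Okudaira--Takayama form $m_j\cdot(x_{i_1},\dots,x_{i_{l_j}})$ of Theorem \ref{ot} \emph{and} the variable sets occurring in consecutive degrees are nested; I will refer to this as the \emph{staircase normal form}.

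For (i) $\Rightarrow$ (iii): if the Taylor resolution of $I$ is minimal then $\beta_i(S/I)=\binom{r}{i}$ for all $i$, so $\projdim(S/I)=r$ and hence $r=|G(I)|\leq n$ by the Auslander--Buchsbaum formula. It remains to see that $I$ is Gotzmann, for which the Herzog--Hibi characterization reduces us to checking $\beta_1(S/I)=\beta_1(S/I^{lex})$, i.e.\ $|G(I^{lex})|=r$. Here I would use the staircase normal form of $I$ to write down $I^{lex}$ explicitly --- a lex ideal whose graded component in each degree is a variable-segment multiple, arranged in the corresponding staircase --- and read off $|G(I^{lex})|=r$.

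For (iii) $\Rightarrow$ (ii): if $I$ is Gotzmann with $r\leq n$, the Herzog--Hibi characterization gives $\beta_{ij}(S/I)=\beta_{ij}(S/I^{lex})$ for all $i,j$, so it suffices to prove that a Gotzmann lex ideal with at most $n$ minimal generators is forced into staircase normal form; then $\max\{m(u):u\in G(I^{lex})\}=r=|G(I^{lex})|$ is immediate, and one transports this equality to $I$ through the equality of graded Betti numbers. For (ii) $\Rightarrow$ (i): conversely, I would show that the numerical equality $\max\{m(u):u\in G(I)\}=|G(I)|$, fed through the linear resolutions of the pieces $I_{\langle j\rangle}$ and Theorem \ref{ot}, forces the staircase normal form on $I$; and in staircase normal form the minimality criterion above is visibly satisfied, since then no generator can divide the $\lcm$ of the others along any subcollection (so in particular the obstruction $\lcm(u_i,u_k)=\lcm(u_i,u_j,u_k)$ with $u_j\mid\lcm(u_i,u_k)$ cannot occur). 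It is worth isolating the easy half of (i) $\Rightarrow$ (ii) along the way: minimality forces each $u_i$ to realize the strict maximum, among all generators, of the exponent of some variable, and these distinguished variables are pairwise distinct, so $r$ distinct variable indices occur and $\max\{m(u):u\in G(I)\}\geq|G(I)|$.

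The step I expect to be the main obstacle is exactly the interface between the global $\lcm$-lattice combinatorics of $G(I)$ and the degreewise data $I_{\langle j\rangle}$: Theorem \ref{ot} speaks only to ideals with a genuine linear resolution, whereas a componentwise linear ideal need not be generated in a single degree, so one must bootstrap from ``each $I_{\langle j\rangle}$ is of Okudaira--Takayama type'' to a statement about all of $G(I)$ at once and about how these pieces fit together across degrees. Establishing the staircase normal form, and showing that it is equivalent to the purely numerical condition (ii), is the technical heart of the proof and the place most likely to demand a careful induction on the generator degrees.
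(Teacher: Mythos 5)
First, a point of order: the paper does not prove Theorem \ref{minTay} at all --- it is quoted, with citation, from \cite{HHMT} as a known background result --- so there is no in-paper argument to compare yours against, and I can only assess your proposal on its own terms.

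The central device of your plan, the ``staircase normal form'' (Taylor minimality of $I$ equivalent to every graded piece $I_{\langle j\rangle}$ being of Okudaira--Takayama type with nested variable sets), is not merely left unproven: it is false. Take $I=(x_1,x_2^2)\subset k[x_1,x_2]$. This ideal is componentwise linear (each $I_{\langle j\rangle}$ is either principal or a power of the maximal ideal), and its Taylor resolution $0\to S(-3)\to S(-1)\oplus S(-2)\to I\to 0$ is minimal, since neither generator divides the other. Yet $I_{\langle 2\rangle}=(x_1,x_2)^2$ has three minimal generators and so cannot be written as $m\cdot(x_{i_1},\ldots,x_{i_l})$; more generally the shadow of an Okudaira--Takayama ideal is usually not of Okudaira--Takayama type, so the nesting picture cannot hold degree by degree. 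Theorem \ref{ot} is a statement about a single equigenerated ideal with a linear resolution and gives no handle on the higher components $I_{\langle j\rangle}$ of a non-equigenerated $I$. Since each of your three implications routes through this lemma (and you yourself flag it as the unproved technical heart), none of them is established. The one self-contained observation --- that minimality of the Taylor complex forces each generator to strictly maximize the exponent of some variable, these variables being pairwise distinct, so at least $|G(I)|$ variables occur --- is correct, but it yields only one inequality toward (ii). The actual mechanism in \cite{HHMT} is homological rather than componentwise: for a componentwise linear ideal the graded Betti numbers are computed by the Eliahou--Kervaire formula applied to the generic initial ideal, a unit entry anywhere in the Taylor complex already forces one in the top differential, so Taylor minimality is equivalent to the single numerical condition $\projdim(I)=|G(I)|-1$, and it is this condition that is matched against $\max\{m(u)\}$ and, via the Herzog--Hibi Betti-number characterization of Gotzmann ideals, against (iii). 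If you want to salvage your outline, that is the lemma to substitute for the staircase normal form.
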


Now we can complete the characterization of non-completely lexsegment ideals which are Gotzmann.

\begin{Theorem} Let $u=x_t^{a_t}\cdots x_{n}^{a_n}$, $v=x_t^{b_t}\cdots x_{n}^{b_n}$ be two monomials of degree $d$, $u>_{lex} v$, $a_t\neq0$, $t\geq1$ and $I=(\mathcal{L}(u,v))$ a non-completely lexsegment ideal. Then $I$ is a Gotzmann ideal in $S$ if and only if $I=m(x_{l},x_{l+1},\ldots,x_{l+p})$ for some $t\leq l\leq n$, some $1\leq p\leq n-l$ and a monomial $m$.
\end{Theorem}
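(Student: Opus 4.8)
The plan is to prove the two implications of the equivalence separately, letting Theorems \ref{ot} and \ref{minTay} do the structural work. For the implication "if $I=m(x_l,x_{l+1},\ldots,x_{l+p})$ then $I$ is Gotzmann": the ideal $(x_l,\ldots,x_{l+p})$ is generated by a set of variables, so it has a linear resolution (its Taylor resolution is the Koszul complex, and is minimal), and multiplication by the monomial $m$ merely shifts this resolution; hence $I$ has a linear resolution and, being generated in a single degree, is componentwise linear. Since $I$ is exactly of the form appearing in Theorem \ref{ot}(ii), its Taylor resolution is minimal, and Theorem \ref{minTay} then yields that $I$ is a Gotzmann ideal (incidentally with $|G(I)|=p+1\le n-l+1\le n$, as it must be).

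For the converse, assume $I=(\mathcal{L}(u,v))$ is non-completely lexsegment and Gotzmann. Since a Gotzmann monomial ideal generated in one degree has a linear resolution (recalled in the introduction), $I$ has a linear resolution and is componentwise linear. The crucial point is to establish that $|G(I)|\le n$. For this I would invoke the classification of lexsegment ideals with linear resolution in \cite{ADH}, which restricts the pair $(u,v)$ to a short explicit list of shapes. In each of these, writing $\mathcal{L}(u,v)$ as the disjoint union of its monomials divisible by $x_t$ and those not divisible by $x_t$, one computes $|G(I)|=|\mathcal{L}(u,v)|=\dim_k I_d$ and $\dim_k I_{d+1}=|\shad\mathcal{L}(u,v)|$ explicitly in terms of $d$, $n$ and the exponents of $u$ and $v$, using for the shadow that the shadow of an initial lexsegment is again an initial lexsegment together with the known description of the (non-lexsegment) shadow in the remaining cases. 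Imposing the Gotzmann condition, that $S/I$ has the smallest growth from degree $d$ to degree $d+1$ permitted by Macaulay's theorem, then forces $\mathcal{L}(u,v)$ to be so short that in particular $|G(I)|\le n$. This is the step I expect to be the main obstacle: it requires the case analysis dictated by the \cite{ADH} classification together with careful bookkeeping of Macaulay expansions and of the shadow of a non-completely lexsegment set, in the same spirit as the proof of Theorem \ref{compG}.

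Once $|G(I)|\le n$ is known, the argument finishes formally. Since $I$ is componentwise linear, Gotzmann and satisfies $|G(I)|\le n$, Theorem \ref{minTay} gives that the Taylor resolution of $I$ is minimal, and Theorem \ref{ot} then yields $I=m'(x_{i_1},\ldots,x_{i_s})$ for some $1\le i_1<\cdots<i_s\le n$ and a monomial $m'$, with minimal generating set $G(I)=\{m'x_{i_1},\ldots,m'x_{i_s}\}$. Since $I$ is generated in degree $d$, every monomial of $\mathcal{L}(u,v)$ is a minimal generator, so $\mathcal{L}(u,v)=G(I)$, and therefore $\mathcal{L}(u,v)=\{w\in\mathcal{M}_d : m'x_{i_1}\ge w\ge m'x_{i_s}\}$ with $u=m'x_{i_1}$ and $v=m'x_{i_s}$. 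The indices $i_k$ must then be consecutive: if $i_{j+1}>i_j+1$ for some $j$, the monomial $m'x_{i_j+1}$ satisfies $u\ge m'x_{i_j}>m'x_{i_j+1}>m'x_{i_{j+1}}\ge v$, so it lies in $\mathcal{L}(u,v)$, yet it is none of the generators $m'x_{i_k}$ -- a contradiction. Hence $i_k=l+k-1$ for $l:=i_1$, that is, $I=m'(x_l,x_{l+1},\ldots,x_{l+p})$ with $p:=s-1$. Finally $p\ge1$ because $|\mathcal{L}(u,v)|\ge2$ (as $u>_{lex}v$), $l+p=i_s\le n$, and $t\le l$ because $u=x_t^{a_t}\cdots x_n^{a_n}$ with $a_t\ne0$ is the lexicographically largest element of $\mathcal{L}(u,v)$, so no element of $\mathcal{L}(u,v)$ -- in particular not $m'x_l$ -- is divisible by any of $x_1,\ldots,x_{t-1}$.
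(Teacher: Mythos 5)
Your easy direction and your endgame are fine: the deduction that a Gotzmann, componentwise linear ideal with $|G(I)|\le n$ has minimal Taylor resolution (Theorem \ref{minTay}), hence equals $m'(x_{i_1},\ldots,x_{i_s})$ (Theorem \ref{ot}), and your argument that the lexsegment property forces the indices $i_k$ to be consecutive, are all correct -- the last point is in fact spelled out more carefully than in the paper, which only says ``taking into account that $I$ is a lexsegment ideal.'' The problem is the step you yourself flag as the main obstacle: establishing $|G(I)|\le n$. You do not prove it; you only describe a computation you would attempt (enumerating the shapes of $(u,v)$ from \cite{ADH}, computing $|\shad\mathcal{L}(u,v)|$ for a non-lexsegment shadow, and ``imposing the Gotzmann condition''). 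That computation is nontrivial -- the shadow of a non-completely lexsegment set is not an initial lexsegment, and there are several distinct shapes to handle (e.g.\ $v=x_1^{a_1}\cdots$ with $b_1=a_1$, with $b_1=a_1-1$, and the case $t>1$) -- and nothing in your write-up verifies that the Gotzmann condition actually forces $|G(I)|\le n$ along this route. As it stands, the hardest part of the converse is deferred, so the proof is incomplete.

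The paper closes this gap by a quite different and cleaner mechanism that avoids shadow counts entirely. In each of the possible shapes of $(u,v)$ one has $x_nu<_{lex}x_1v$, so by \cite[Proposition 3.2]{EOS} $\depth(S/I)\neq 0$ and hence $\projdim(S/I)<n$. Since $I$ is Gotzmann, all graded Betti numbers of $S/I$ and $S/I^{lex}$ agree, so $\projdim(S/I^{lex})<n$ as well; writing $I^{lex}=(\mathcal{L}^i(w))$ and using the Eliahou--Kervaire description of the resolution of this stable ideal, $\projdim(S/I^{lex})<n$ forces $w=x_1^{d-1}x_j$ with $j<n$, whence $|G(I)|=|\mathcal{L}^i(w)|=j<n$. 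If you want to salvage your approach you must either carry out the shadow computation in full for every case of the \cite{ADH} classification, or replace that step by a depth/projective-dimension argument of this kind.
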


\begin{proof} If $I=m(x_{l},x_{l+1},\ldots,x_{l+p})$ for some $t\leq l\leq n$, some $1\leq p\leq n-l$ and a monomial $m$, then $I$ is isomorphic to the monomial prime ideal $(x_{l},x_{l+1},\ldots,x_{l+p})$ and the Koszul complex of the sequence $x_{l},x_{l+1},\ldots,x_{l+p}$ is isomorphic to the minimal graded free resolution of $I$. Therefore $I$ has a linear resolution and, by Theorem \ref{ot}, the Taylor resolution of $I$ is minimal. Since any ideal with a linear resolution is componentwise linear, it follows by Theorem \ref{minTay} that $I$ is a Gotzmann ideal. 

Now it remains to prove that, if $I$ is a Gotzmann ideal in $S$, then $I$ has the required form.

Firstly, we prove that $\projdim(S/I)<n$. For this, we study the following cases.

\textit{Case I: $t=1$, $b_1=0,a_1=1$.} Since $I$ is a non-completely lexsegment ideal which is Gotzmann, $I$ has a linear resolution. Therefore, by \cite[Theorem 2.4]{ADH}, $u$ and $v$ have the form
	\[u=x_1x_{l+1}^{a_{l+1}}\ldots x_n^{a_n}\ \mbox{and}\ v=x_lx_n^{d-1}
\]
for some $l$, $2\leq l\leq n-1$. Since $x_nu<_{lex}x_1v$, using \cite[Proposition 3.2]{EOS} we get $\depth(S/I)\neq0$. Hence $\projdim(S/I)<n$.

\textit{Case II: $t=1$, $0<b_1<a_1$.}  Since $I$ is a non-completely lexsegment ideal, we must have $b_1=a_1-1$. Now, if $I$ does not have a linear resolution, $I$ is not Gotzmann. The ideal $I$ has a linear resolution if and only if $J=(\mathcal{L}(u',v'))$ has a linear resolution, where $u'=u/x_1^{b_1}$ and $v'=v/x_1^{b_1}$. One may easy check that $J$ is a non-completely lexsegment ideal. Therefore $J$ has a linear resolution if and only if $u'$ and $v'$ have the form
	$u'=x_1x_{l+1}^{a_{l+1}}\ldots x_n^{a_n}\ \mbox{and}\ v'=x_lx_n^{d-1}$
for some $l$, $2\leq l\leq n-1$ \cite[Theorem 2.4]{ADH} and this implies $u=x_1^{a_1}x_{l+1}^{a_{l+1}}\ldots x_n^{a_n}\ \mbox{and}\ v'=x_1^{a_1-1}x_lx_n^{d-1}$.
Since $x_nu<_{lex}x_1v$, using \cite[Proposition 3.2]{EOS} we get $\depth(S/I)\neq0$. Hence $\projdim(S/I)<n$. 

\textit{Case III: $t=1$, $a_1=b_1>0$.} Since $x_nu<_{lex}x_1v$, we have that $\depth(S/I)\neq0$ by \cite[Proposition 3.2]{EOS}. Hence $\projdim(S/I)<n$.

\textit{Case IV: $t>1$.} We obviously have $x_nu<_{lex}x_1v$ and, by \cite[Proposition 3.2]{EOS}, $\depth(S/I)\neq0$. Therefore $\projdim(S/I)<n$.

We may conclude that $\projdim(S/I)<n$ in all the cases.

Let $w\in\mathcal{M}_d$ be a monomial such that $|\mathcal{L}(u,v)|=|\mathcal{L}^i(w)|$. Since $I$ is a Gotzmann ideal, $I^{lex}$ is generated in degree $d$, that is $I^{lex}=(\mathcal{L}^i(w))$. By \cite[Corollary 1.4]{HH}, $I$ and $I^{lex}$ have the same Betti numbers. In particular, we have
	\[\projdim(I)=\projdim(I^{lex}).
\]
Since $\projdim(S/I)<n$, we have $\projdim(S/I^{lex})<n$. The ideal $I^{lex}$ is stable in the sense of Eliahou and Kervaire, thus there exists $j<n$ such that $w=x_1^{d-1}x_j$. Therefore, $|\mathcal{L}(u,v)|=j<n$. By the hypothesis, $I$ is a Gotzmann ideal and $I$ is componentwise linear since it has a linear resolution. By Theorem \ref{minTay}, the Taylor resolution of $I$ is minimal. The conclusion follows by Theorem \ref{ot} and taking into account that $I$ is a lexsegment ideal.
\end{proof}


\begin{thebibliography}{99}


\bibitem{ADH} A. Aramova, E. De Negri, J. Herzog, {\em Lexsegment ideals with linear resolutions,} Illinois J. Math., {\bf 42}(3), 1998, 509--523.

\bibitem{BH}
W. Bruns, J. Herzog, \textit{Cohen--Macaulay Rings}, Cambridge University Press, Cambridge, 1993.

\bibitem{CH}
A. Conca, J. Herzog, \textit{Castelnuovo--Mumford regularity of products of ideals}, Collect. Math. \bf{54}\rm(2003), 137--152. 

\bibitem {DH} E. De Negri, J. Herzog, {\em Completely lexsegment ideals,} Proc. Amer. Math. Soc., {\bf 126}(12), 1998, 3467--3473.

\bibitem{EK} S. Eliahou, M. Kervaire, {\em Minimal resolutions of some monomial ideals,} J. Algebra, {\bf 129}(1990), 1--25.

\bibitem{EOS}
V. Ene, A. Olteanu, L. Sorrenti, \textit{Properties of lexsegment ideals}, arXiv.org:0802.1279v1.

\bibitem{G} G. Gotzmann,\em{ Eine Bedingung f\"ur die Flachheit und das Hilbertpolynom eines
graduierten Ringes}\rm, Math. Z. \textbf{158}(1978), 61--70.

\bibitem{HH}
J. Herzog, T. Hibi, \textit{Componentwise linear ideals}, Nagoya Math. J., \bf153\rm(1999), 141--153.

\bibitem{HHMT}
J. Herzog, T. Hibi, S. Murai, Y. Takayama, \textit{Componentwise linear ideals with minimal or maximal Betti numbers}, Ark. Math., \bf46\rm(2008), 69--75.

%\bibitem {HT} J. Herzog, Y. Takayama, {\em Resolutions by mapping cones,} Homology, Homotopy and Applications, vol 4(2), 2002, 277--294.


\bibitem {OT} M. Okudaira, Y. Takayama, {\em Monomial ideals with linear quotients whose Taylor resolutions are minimal,} arXiv.org:0610168v1.

\bibitem{SZ}
A. Soleyman Jahan, X. Zheng, \textit{Pretty clean monomial ideals and linear quotients}, arXiv.org:0707.2914v1.

\end{thebibliography}
\end{document}